\newcommand{\Real}{\mathbb R}
\newcommand{\Natural}{\mathbb N}
\newcommand{\etal}{et. al.\,}
\newtheorem{theorem}{Theorem}
\newtheorem{proposition}{Proposition}
\newtheorem{corollary}{Corollary}
\newcommand{\vo}[1]{\boldsymbol{#1}} 
\newcommand{\mo}[1]{\boldsymbol{#1}} 
\newcommand{\real}{\mathbb{R}}
 \newtheorem{lemma}[theorem]{Lemma}
\newcommand{\Exp}[1]{\boldsymbol{\mathsf{E}} \left[#1\right]}
\newcommand{\E}[1]{\boldsymbol{\mathsf{E}} \left[#1\right]}
\newcommand{\uniform}[1]{\mathcal{U}_{#1}}
\newcommand{\x}{\vo{x}} 
\newcommand{\xdot}{\dot{\vo{x}}} 
\renewcommand{\u}{\vo{u}} 
\newcommand{\param}{\vo{\Delta}} 
\newcommand{\domain}[1]{{\mathcal{D}_{#1}}} 
\newcommand{\Y}{\vo{Y}} 
\newcommand{\A}{\mo{A}} 
\let\B=\undefined
\newcommand{\B}{\mo{B}} 
\newcommand{\basis}[2]{%
 \phi_{#1}
  \ifthenelse{\isempty{#2}}%
    {}
    {({#2})}
}
\newcommand{\xpc}{\x_{pc}} 
\newcommand{\xpcdot}{\dot{\x}_{pc}} 
\newcommand{\Apc}{\mo{A}_{pc}} 
\newcommand{\I}[1]{\vo{I}_{{#1}}} 
\renewcommand{\vec}[1]{\boldsymbol{\mathsf{vec}}\left({#1}\right)}
\newcommand{\X}{\mo{X}} 
\newcommand{\W}{\mo{W}} 
\newcommand{\K}{\mo{K}} 
\newcommand{\set}[1]{\mathcal{#1}}
\newcommand{\inner}[1]{\left\langle #1 \right\rangle}
\newcommand{\figlabel}[1]{\label{fig:#1}}
\newcommand{\eqnlabel}[1]{\label{eqn:#1}}
\newcommand{\eqn}[1]{\eqref{eqn:#1}}
\newcommand{\fig}[1]{fig.(\ref{fig:#1})}
\newcommand{\Fig}[1]{Fig.(\ref{fig:#1})}
\definecolor{darkgreen}{rgb}{0,0.65,0}
\renewcommand{\P}{\mo{P}} 
\newcommand{\Kp}{\K(\param)} 
\newcommand{\Q}{\mo{Q}} 
\newcommand{\R}{\mo{R}} 
\newcommand{\Acl}{\A_\textit{cl}}
\newcommand{\Phin}[1]{\mo{\Phi}_{#1}}
\newcommand{\sym}[1]{\mathsf{sym}\left(#1\right)}
\newcommand{\zpc}{\vo{z}_{pc}}
\newcommand{\zsc}{\vo{z}_{sc}}
\newcommand{\tr}{\mathbf{tr\;}}
\newcommand{\M}{\mo{M}}
\newcommand{\Pp}{\P(\param)}
\newcommand{\Yp}{\Y(\param)}
\newcommand{\Wp}{\W(\param)}
\newcommand{\Yb}{\bar{\Y}}
\newcommand{\mY}{\mathcal{Y}}
\newcommand{\mW}{\mathcal{W}}
\newcommand{\VYb}{\mo{V}_{\Yb}}
\newcommand{\VW}{\mo{V}_{\W}}
\newcommand{\mVW}{\mathcal{V}_{\W}}
\newcommand{\mVYb}{\mathcal{V}_{\Yb}}
\newcommand{\Ln}[1]{\vo{L}_{#1}}
\newcommand{\Yt}{\tilde{\Y}}
\newcommand{\ba}{\begin{eqnarray}}
\newcommand{\ea}{\end{eqnarray}}
\begin{document}

\title{Design of Linear Parameter Varying Quadratic Regulator in Polynomial Chaos Framework
}
\author{\begin{tabular}{cc}Shao-Chen Hsu & Raktim Bhattacharya\end{tabular} \\ \small Laboratory for Uncertainty Quantification, \\ \small Department of Aerospace Engineering, Texas A\&M University.
\thanks{The work of Shao-Chen Hsu was supported by the TIAS Award of Heep Fellowship.}
\thanks{Shao-Chen Hsu and Raktim Bhattacharya are with the Department of
Aerospace Engineering, Texas A$\&$M University, College Station, TX 77843-
3141, USA, $\lbrace$addyhsu, raktim$\rbrace$@tamu.edu. }
}

\maketitle

\begin{abstract}
We present a new theoretical framework for designing linear parameter varying controllers in the polynomial chaos framework. We assume the scheduling variable to be random and apply polynomial chaos approach to synthesize the controller for the resulting linear stochastic dynamical system. Two algorithms are presented that minimize the performance objective with respect to the stochastic system. The first algorithm is based on Galerkin projection and the second algorithm is based on stochastic collocation. LPV controllers from both the algorithms are shown to outperform classical LPV designs with respect to regulator design for nonlinear missile system.
\end{abstract}

\section{Introduction}

Linear parameter varying (LPV) systems are of the form 
\begin{equation}
\xdot = \A(\rho)\x + \B(\rho)\u,\eqnlabel{lpv}
\end{equation}

where system matrices depend on unknown parameter $\rho(t)$, which is measurable in real-time \cite{shamma2012overview, leith2000survey}. Many nonlinear systems can be transformed to LPV systems and control systems can be designed using parameter dependent convex optimization problems. Typically, parameter dependent quantities are approximated using a known class of functions such as multilinear basis functions of $\rho$, linear fractional transformations of system matrices, or by gridding the parameter space. Both these approaches result in solution of a finite, but possible large, number of linear matrix inequalities (LMIs). Further, the choice of the basis functions or the resolution of the grid could lead to conservatisms in the design. Clearly, there is a tradeoff between problem size and conservatism in the design \cite{toker1997complexity}. 

Fujisaki \etal \cite{fujisaki2003probabilistic} addressed the computational complexity of such problems by presenting a probabilistic approach to solve these problems, via a sequential randomized algorithm, which significantly reduces the computational complexity. Here the parameter $\rho(t)$ is assumed to be bounded i.e. $\rho(t) \in \domain{\rho} \subset \real^d$ and is treated as a random variable, with a distribution $f_{\rho}(\rho)$ defined over $\domain{\rho}$. The LPV synthesis problem is solved by sampling $\domain{\rho}$ and solving the sampled LMIs using a sequential-gradient method. As with any probabilistic algorithm, there is a tradeoff between sample complexity and confidence in the solution. Often, a large number of samples are required to generate a solution with high confidence.  Also, the LMIs depend only on $\rho(t)$ and not in $\dot{\rho}(t)$ as it is in classical LPV formulation.

This paper is motivated by the work of Fujisaki \etal $\;$ and is based on the idea of treating $\rho$ as a random variable. Therefore, by substituting $\rho\equiv \param$ in the system equation, we get 
\begin{align}
 \dot{\x}	 &=  \A(\param)\x + \B(\param)\u, \eqnlabel{contiDyn}
\end{align}
where $\param\in\real^d$ is a vector of uncertain parameters, with joint probability density function $f_{\param}(\param)$. Matrices $\A(\param)\in\real^{n\times n}$, $\B(\param) \in\real^{n\times m}$ are system matrices that depend on $\param$. Consequently, the solution $\x:=\x(t,\param)\in\real^n$ also depends on $\param$. Like in \cite{fujisaki2003probabilistic} we ignore temporal variation in the parameter and thus treat $\param$ as random variables. Thus, we now study the system in \eqn{lpv} as a \textit{linear time invariant system} with probabilistic system parameters. The LPV control design objective is then equivalent to designing a state-feedback law of the form $\u = \K(\param)\x$, which stabilizes the system in some suitable sense, where $\K(\param)\in\real^{m\times n}$. Thus, we are looking to obtain a parameter dependent gain $\K(\param)$ that stabilizes the system in \eqn{contiDyn} and optimizes a certain performance index. The closed-loop system is then
\begin{align}
\dot{\x}	 &=  \left[\A(\param) + \B(\param)\K(\param)\right]\x, \nonumber \\
			 &=  \Acl(\param)\x. \eqnlabel{Acl}
\end{align}

There are two distinct differences between the work presented here and that in \cite{fujisaki2003probabilistic}. We do not use a randomized approach to solve the stochastic problem, and thus don't have issues related to confidence in the solution. In our approach, the stochastic problem is solved using polynomial chaos theory, which is a deterministic approach as described later. In addition, stability of the LPV system is formulated 
in an optimal way which minimizes a cost-to-go function for the corresponding stochastic system.
In \cite{fujisaki2003probabilistic}, stability of the LPV system is formulated in the probabilistic sense. 

Main contribution of this paper is an LPV regulator synthesis algorithm, in the polynomial chaos framework, which generates a parameter dependent gain. The controller is optimal with respect to a quadratic cost in state and control. The paper is organized as follows. We first provide a brief background on polynomial chaos theory and show how it is applied to study linear dynamical systems with random parameters. 
This is followed by conditions for optimal regulation in the polynomial chaos framework for closed-loop system with parameter dependent controller.
This leads to the main controller synthesis related results in the paper. The paper ends with an example nonlinear missile system that highlights the superiority of the polynomial chaos approach over the classical LPV design approach.

\section{Polynomial Chaos Theory}
Polynomial chaos (PC) is a \textit{deterministic} method for evolution of uncertainty in dynamical system, when there is probabilistic uncertainty in the system parameters. Polynomial chaos was first introduced by Wiener \cite{wiener}
where Hermite polynomials were used to model stochastic processes
with Gaussian random variables. It can be thought of as an extension of Volterra's theory of nonlinear functionals for stochastic systems \cite{volterra,pcFEM}. According to Cameron and Martin \cite{CameronMartin} such an expansion converges in the $\mathcal{L}_2$ sense for any arbitrary stochastic process with
finite second moment. This applies to most physical systems. Xiu
\etal \cite{xiu:02} generalized the result of Cameron-Martin to various
continuous and discrete distributions using orthogonal polynomials
from the so called Askey-scheme \cite{Askey-Polynomials} and
demonstrated $\mathcal{L}_2$ convergence in the corresponding Hilbert
functional space. The PC framework has been applied to applications including
stochastic fluid dynamics \cite{pcFluids2,pcFluids4,pcFluids5},
stochastic finite elements \cite{pcFEM}, and solid mechanics
\cite{pcSolids1,pcSolids2}, feedback control \cite{hover2006application, kim2012generalized, fisher2009linear, bhattacharya2012linear} and estimation \cite{Dutta2010}. It has been shown that PC based methods are computationally far superior than Monte-Carlo based methods \cite{xiu:02, pcFluids2, pcFluids4, pcFluids5, le2010spectral}. See \cite{eldred2009comparison} for several benchmark problems.

Formally, the PC framework is described as follows. Let $(\Omega,\mathcal{F},P)$ be a probability space, where $\Omega$
is the sample space, $\mathcal{F}$ is the $\sigma$-algebra of the
subsets of $\Omega$, and $P$ is the probability measure. Let
$\param(\omega) =
(\param_1(\omega),\cdots,\param_d(\omega)):(\Omega,\mathcal{F})\rightarrow(\Real^d,\mathcal{B}^d)$
be an $\Real^d$-valued continuous random variable, where
$d\in\Natural$, and $\mathcal{B}^d$ is the $\sigma$-algebra of Borel
subsets of $\Real^d$. 

A general second order process $X(\omega)\in
\mathcal{L}_2(\Omega,\mathcal{F},P)$ can be expressed by polynomial
chaos as
\begin{equation}
\eqnlabel{gPC}
X(\omega) = \sum_{i=0}^{\infty} x_i\phi_i({\param}(\omega)),
\end{equation}
where $\omega$ is the random event and $\phi_i({\param}(\omega))$
denotes the polynomial chaos basis of degree $p$ in terms of the random variables
$\param(\omega)$. In practice, the infinite series is truncated and $X(\omega)$ is approximated by 
\[
X(\omega) \approx \hat{X}(\omega) = \sum_{i=0}^{N} x_i\phi_i({\param}(\omega)).
\] The functions $\{\phi_i\}$ are a family of
orthogonal basis in $\mathcal{L}_2(\Omega,\mathcal{F},P)$ satisfying
the relation
\begin{align}
\Exp{\phi_i\phi_j}:= \int_{\mathcal{D}_{\param}}\hspace{-0.1in}{\basis{i}{\param}\basis{j}{\param} f_{\param}(\param)
\,d\param}   = \begin{cases}
    0,& \text{if } i \neq j,\\
    \int_{\domain{\param}} \phi_i^2f_{\param}(\param)\,d\param, & \text{otherwise.}
\end{cases}\eqnlabel{basisFcn}
\end{align}
where $\mathcal{D}_{\param}$ is the domain of the random variable $\param(\omega)$, and
$f_{\param}(\param)$ is a probability density function for $\param$. Table \ref{table.pc} shows the family of basis functions for random variables with common distributions.
\begin{table}[htbp]
\begin{center}
\begin{tabular}{|c|c|}
\hline
Random Variable $\param$ & $\phi_i(\param)$ of the Wiener-Askey Scheme\\ \hline
Gaussian & Hermite \\
Uniform  & Legendre \\
Gamma   & Laguerre \\
Beta    & Jacobi\\
\hline
\end{tabular}
\end{center}
\caption{Correspondence between choice of polynomials and given
distribution of $\param(\omega)$ \cite{xiu:02}.} 
\label{table.pc}
\end{table}

Generally, there are two methods for expanding a random process in this framework -- Galerkin projection and stochastic collocation. These two approaches are described next.

\subsection{Galerkin Projection}
With respect to the dynamical system defined in \eqn{contiDyn}, the solution can be approximated by the polynomial chaos expansion as
\begin{align}
	\x(t,\param) \approx \hat{\x}(t,\param) =  \sum_{i=0}^N \x_i(t)\basis{i}{\param},
\end{align}
where the polynomial chaos coefficients $\x_i \in \real^n$. Define $\mo{\Phi}(\param)$ to be
\begin{align}
\mo{\Phi} &\equiv \mo{\Phi}(\param) := \begin{pmatrix}\basis{0}{\param} & \cdots & \basis{N}{\param}\end{pmatrix}^T, \text{ and } \\
\mo{\Phi}_n &\equiv \mo{\Phi}_n(\param) := \mo{\Phi}(\param) \otimes \I{n},
\end{align}
where $\I{n}\in\real^{n\times n}$ is identity matrix. Also define matrix $\X\in\real^{n\times(N+1)}$, with polynomial chaos coefficients $\x_i$, as
\[ \X = \begin{bmatrix} \x_0 & \cdots & \x_N \end{bmatrix}.\]

This lets us define $\hat{\x}(t,\param)$ as 
\begin{align}
\hat{\x}(t,\param) := \X(t)\mo{\Phi}(\param) \eqnlabel{compactX}.
\end{align}
Noting that $\hat{\x} \equiv \vec{\hat{\x}}$, we obtain an alternate form for \eqn{compactX},
\begin{align}
\hat{\x} \equiv  \vec{\hat{\x}}  = \vec{\X\mo{\Phi}}  = \vec{\I{n}\X\mo{\Phi}}  = (\mo{\Phi}^T\otimes \I{n})\vec{\X} = \mo{\Phi}_n^T\xpc, \eqnlabel{compactxpc}
\end{align}
where  $\xpc := \vec{\X}$, and $\vec{\cdot}$ is the vectorization operator \cite{horn2012matrix}.

Since $\hat{\x}$ from \eqn{compactxpc} is an approximation, substituting it in \eqn{Acl} we get equation error $\vo{e}$, which is given by 
\begin{align}
\vo{e} &:= \dot{\hat{\x}} - \Acl(\param)\hat{\x}  =  \mo{\Phi}_n^T\xpcdot - \Acl(\param)\mo{\Phi}_n^T\xpc.
\end{align}
Best $\set{L}_2$ approximation is obtained by setting
\begin{align}
\inner{\vo{e}\phi_i} := \Exp{\vo{e}\phi_i}=0, \text{ for } i = 0,1,\cdots,N.
\end{align}
\begin{align}
&\Exp{\Phin{n}\Phin{n}^T}\xpcdot = \Exp{\Phin{n}\Acl\Phin{n}^T}\xpc,\nonumber \\
& \implies \xpcdot  = \Exp{\Phin{n}\Phin{n}^T}^{-1}\Exp{\Phin{n}\Acl\Phin{n}^T}\xpc,  \eqnlabel{pcDynamics}\\
&\text{or } \xpcdot  = \Apc \xpc.\eqnlabel{pcAcl}
\end{align}
where $\Phin{n}$ and $\Acl$ depend on $\param$ as defined earlier. Equation \eqn{pcAcl}, is best finite dimensional approximation of \eqn{Acl} in the $\set{L}_2$ sense.

\subsection{Stochastic Collocation}
In this approach, we introduce Lagrangian interpolants $$l_i(\param)=\prod^N_{j=0,j\neq i}\frac{\param-\param_j}{\param_i-\param_j}$$ as basis functions, where $\param_i$ are the roots of the polynomial chaos basis of degree $N+1$. The Lagrange interpolants are orthogonal to each other in the $\mathcal{L}_2$ sense, which can be proved using  Gaussian quadrature rule as follows.
\begin{align*}
\E{l_i l_j}&=\int_{\mathcal{D}_{\param}}l_i(\param)l_j(\param)f_{\param}(\param)d\param\\
&=\sum_{k=0}^{N}\int_{\mathcal{D}_{\param}}l_k(\param)f_{\param}(\param)d\param l_i(\param_k)l_j(\param_k).
\end{align*}
Since $i\neq j$, we can conclude
\begin{align*}
\E{l_i l_j}=0.
\end{align*}

The solution to the dynamical system \eqn{Acl} thus can also be approximated as 
\begin{align*}
\x(t,\param) \approx \tilde{\x} = \sum_{i=0}^N \x_{i,sc}(t)l_{i}(\param)=\Ln{n}^T\x_{sc},
\end{align*}
where $\Ln{n}=\left[l_0\ldots l_N\right]^T\otimes\I{n}$, and $\x_{sc}=[\x_{0,sc} \ldots \x_{N,sc} ]^T$ are coefficients determined by solving $\dot{\x}_{i,sc}=\A_{cl}(\param_i)\x_{i,sc}$. It implies that the solution $\tilde{\x}$ is exact at those specified sample points, which means that the error $\tilde{\vo{e}}=\dot{\tilde{\x}}-\A_{cl}(\param)\tilde{\x}$ is forced to be zero at the sample points $\param_i$. 

We need the following result to derive the optimal control law  in the stochastic collocation setting.

\begin{lemma}
Consider two Lagrangian interpolants $l_i(\param)$ and $l_j(\param)$ , and a function $g(\param)$, then
\begin{align}
\E{l_i(\param)l_j(\param)g(\param)} =
\begin{cases}
0, &i\neq j\\
\E{l_i(\param)}g(\param_i), &i=j
\end{cases},
\label{eq:gaussian}
\end{align}
if $g(\param)$ is affine in $\param$.
\label{lem:gaussian}
\end{lemma}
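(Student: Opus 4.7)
The plan is to invoke Gauss quadrature exactness. By construction the nodes $\param_0,\ldots,\param_N$ are the roots of the degree-$(N+1)$ orthogonal polynomial associated with the density $f_{\param}(\param)$, so the $(N+1)$-point Gauss rule with weights $w_k := \E{l_k(\param)}$ integrates every polynomial of degree at most $2N+1$ exactly against $f_{\param}$. I will first observe that $l_i(\param)l_j(\param)g(\param)$ is itself a polynomial in $\param$: each Lagrange interpolant has degree $N$, and $g$ is affine, so the product has degree at most $2N+1$. Hence the Gauss rule is exact on it.

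Applying the quadrature identity,
\begin{align*}
\E{l_i(\param)l_j(\param)g(\param)}
&= \int_{\domain{\param}} l_i(\param)l_j(\param)g(\param)f_{\param}(\param)\,d\param \\
&= \sum_{k=0}^{N} w_k\, l_i(\param_k)\, l_j(\param_k)\, g(\param_k).
\end{align*}
Next I would use the cardinality property of Lagrange interpolants, $l_i(\param_k)=\delta_{ik}$, which reduces the sum to a single surviving term (when $i=j=k$) or to zero (when $i\neq j$, since one of the two factors $l_i(\param_k),l_j(\param_k)$ vanishes for every $k$). For $i=j$ the nonzero term is $w_i\, g(\param_i) = \E{l_i(\param)}\, g(\param_i)$, which is exactly the claimed formula.

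The main subtlety, and where I would spend a sentence of care, is the degree bookkeeping: I need $\deg(l_i l_j g)\le 2N+1$ so that Gauss exactness applies. With $\deg l_i=\deg l_j=N$ this forces $\deg g\le 1$, which explains why the affine hypothesis on $g$ is both used and tight; if $g$ were of higher degree the quadrature identity above would only hold approximately and the clean conclusion would break. The identification of the Gauss weight $w_k$ with $\E{l_k(\param)}$ follows by applying the same exactness property to the polynomial $l_k$ itself (of degree $N\le 2N+1$), which yields $\E{l_k(\param)} = \sum_j w_j\, l_k(\param_j) = w_k$, so no additional machinery is needed. Everything else is routine.
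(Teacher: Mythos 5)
Your proposal is correct and follows essentially the same route as the paper's proof: apply the $(N+1)$-point Gauss quadrature rule with nodes at the roots of the degree-$(N+1)$ orthogonal polynomial, note that $\deg(l_i l_j g)\le 2N+1$ when $g$ is affine so the rule is exact, and use $l_i(\param_k)=\delta_{ik}$ to collapse the sum. Your explicit identification of the weights $w_k$ with $\E{l_k(\param)}$ and the degree bookkeeping are slightly more careful than the paper's write-up, but the argument is the same.
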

\begin{proof}
\begin{align*}
&\E{l_i(\param)l_j(\param)g(\param)}=\int_{\mathcal{D}_{\param}}l_i(\param) l_j(\param) g(\param)f_{\param}(\param)d\param\\
&\approx\sum_{m=0}^{N}\int_{\mathcal{D}_{\param}}l_m (\param)f_{\param}(\param)d\param l_i(\param_m) l_j(\param_m) g(\param_m)\\ 
&=\begin{cases}
0& i\neq j\\
\int_{\mathcal{D}_{\param}}l_i(\param) f_{\param}(\param)d\param g(\param_i)&i=j
\end{cases}\\
&=\begin{cases}
0&i\neq j\\
\E{l_i(\param)}g(\param_i)&i=j
\end{cases},
\end{align*}
According to Gaussian quadrature rule, the expression is exact when $l_i l_j g$ is a polynomial of degree  at most $2N+1$. It is known that the Lagrangian interpolants are N-th order polynomials, so we can conclude that if $g$ is affine in $\param$, \eqref{eq:gaussian} is exact.
\end{proof}

We next present synthesis of optimal control law using both Galerkin projection and stochastic collocation approach.

\section{Optimal Controller Synthesis}
Given the system in \eqn{contiDyn} we are interested in state feedback control $\u:=\K(\param)\x(t,\param)$ that minimizes the following cost function
\begin{align}
\min_{\u} \E{\int_0^\infty (\x^T \Q \x + \u^T\R\u)\,dt}.
\eqnlabel{costFcn}
\end{align}

If $\exists$ $V(\x)>0$ such that 
\begin{align}
\E{\frac{dV}{dt}} \leq - \E{\x^T \Q \x + \u^T\R\u}. \eqnlabel{condition}
\end{align}
Integrating from $[0,T]$ gives us
\begin{align*}
\int_0^T \E{\frac{dV}{dt}}dt &\leq -\int_0^T \E{\x^T \Q \x + \u^T\R\u}dt, \\
\E{V(\x(T))} - \E{V(\x(0))} &\leq -\int_0^T \E{\x^T \Q \x + \u^T\R\u}dt, 
\end{align*}
since $\E{V(\x(T))} \geq 0$ implies 
\begin{align}
- \E{V(\x(0))} \leq -\int_0^T \E{\x^T \Q \x + \u^T\R\u}dt, \text{ for all $T>0$},
\end{align}
or,
\begin{align}
\E{V(\x(0))} \geq \int_0^\infty \E{\x^T \Q \x + \u^T\R\u}dt. 
\end{align}
Therefore, \eqn{condition} provides a sufficient condition for upper bound on the cost-to-go.

In this paper, we use polynomial chaos theory to determine the expectation operator in \eqn{condition}. We apply both Galerkin projection and stochastic collocation techniques, and derive control synthesis problem in the respective frameworks, for the system considered in \eqn{contiDyn}. We will see later, the Galerkin projection is more accurate than stochastic collocation technique, but results in more complex synthesis problems. Consequently, the computational time for synthesis is more, but generates better performing controller.

Before we proceed, we  need the following result in the rest of the paper.
\begin{proposition} For any vector $\vo{v} \in\real^{N+1}$ and matrix $\vo{M}\in\real^{m\times n}$
\begin{equation}
		\mo{M}(\mo{v}^T\otimes \I{n}) = (\mo{v}^T \otimes \I{m}) (\I{N+1}\otimes\mo{M}), \eqnlabel{prop3}
	\end{equation}
	where $\I{\ast}$ is identity matrix with indicated dimension.
\end{proposition}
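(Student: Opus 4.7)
The plan is to show that both sides of the identity equal the single Kronecker product $\vo{v}^T \otimes \mo{M}$, by two applications of the mixed-product rule $(A\otimes B)(C\otimes D) = (AC)\otimes(BD)$ (valid whenever the inner products are conformable).

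First, I will verify conformability on both sides just to fix notation: $\vo{v}^T$ is $1\times(N+1)$, $\I{n}$ is $n\times n$, so $\vo{v}^T\otimes\I{n}$ is $n\times n(N+1)$ and $\mo{M}(\vo{v}^T\otimes\I{n})$ is $m\times n(N+1)$. On the right, $\vo{v}^T\otimes\I{m}$ is $m\times m(N+1)$ and $\I{N+1}\otimes\mo{M}$ is $m(N+1)\times n(N+1)$, with product again $m\times n(N+1)$, so the equality is at least dimensionally sensible.

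Next, for the left-hand side I will write $\mo{M}$ as the Kronecker product $1\otimes\mo{M}$, where $1$ is the scalar identity, and apply the mixed-product rule:
\begin{align*}
\mo{M}(\vo{v}^T\otimes\I{n}) = (1\otimes\mo{M})(\vo{v}^T\otimes\I{n}) = (1\cdot\vo{v}^T)\otimes(\mo{M}\cdot\I{n}) = \vo{v}^T\otimes\mo{M}.
\end{align*}
For the right-hand side, a direct application of the same rule gives
\begin{align*}
(\vo{v}^T\otimes\I{m})(\I{N+1}\otimes\mo{M}) = (\vo{v}^T\I{N+1})\otimes(\I{m}\mo{M}) = \vo{v}^T\otimes\mo{M}.
\end{align*}
Comparing the two expressions yields the claimed identity.

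There is no real obstacle here: the whole argument is one conformability check plus two invocations of the mixed-product identity. If one wants to avoid the slightly informal step of identifying $\mo{M}$ with $1\otimes\mo{M}$, an equally clean alternative is to verify the identity blockwise, writing $\vo{v}^T = (v_0,\ldots,v_N)$ so that both sides expand to the row of blocks $\begin{bmatrix} v_0\mo{M} & v_1\mo{M} & \cdots & v_N\mo{M}\end{bmatrix}$; I would include this as a one-line sanity check rather than as the main proof.
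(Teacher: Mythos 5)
Your proof is correct and follows essentially the same route as the paper: both write $\mo{M} = 1\otimes\mo{M}$ and apply the mixed-product rule on each side to reduce both expressions to $\vo{v}^T\otimes\mo{M}$. The added dimension check and blockwise remark are harmless extras but not a different argument.
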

\begin{proof}
	\begin{align*}
		\mo{M}(\mo{v}^T \otimes \I{n})  & = (1 \otimes \mo{M})(\mo{v}^T \otimes \I{n})  \\ &= \mo{v}^T \otimes \mo{M} = (\mo{v}^T\I{N+1}) \otimes (\I{m}\mo{M})\\
		&= (\mo{v}^T \otimes \I{m})(\I{N+1}\otimes \mo{M}).	\end{align*}
\end{proof}

\subsection{Galerkin Projection Based Formulation}
Here Galerkin projection is used to solve the stochastic optimal control problem. Using the parameterization given by \eqn{compactxpc}, and the sufficient condition in \eqn{condition}, we present the following optimal control law synthesis algorithm.

\begin{theorem} \label{thmMain}
Controller gain $\Kp:= \Wp\Y^{-1}(\param)$ minimizes \eqn{costFcn} if $\exists$ $\Yp=\Y^T(\param)>0\in\real^{n\times n}$ and $\Wp=\W^T(\param)>0\in\real^{m\times n}$,  which are the solutions of the optimization problem
\begin{align}
\max \tr \E{\Yp} \eqnlabel{cost}
\end{align}
subject to
\begin{align}
\E{\sym{\mY\Phin{n}\A^T\Phin{n}^T +  \mW^T\Phin{m}\B^T\Phin{n}}+ \mY\Phin{n}\Q\Phin{n}^T\mY} +  \E{\mW^T\Phin{m}\R\Phin{m}^T\mW} \leq 0, \eqnlabel{constr}
\end{align}
where $\mY:=\I{N+1}\otimes \Yp$ and $\mW:=\I{N+1}\otimes \Wp$, and $\sym{\vo{X}}:=\vo{X}+\vo{X}^T$.
\end{theorem}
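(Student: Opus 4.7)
My plan is to use a parameter-dependent quadratic Lyapunov function $V(\hat{\x},\param) = \hat{\x}^T \P(\param)\hat{\x}$ with $\P(\param) := \Yp^{-1}$, plug it into the sufficient condition \eqn{condition}, and then linearize the resulting bilinear matrix inequality by a congruence transformation with the Kronecker-lifted variable $\mY = \I{N+1}\otimes\Yp$, together with the substitution $\Wp := \Kp\Yp$.

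Differentiating $V$ along the closed-loop flow \eqn{Acl} yields $\dot V = \hat{\x}^T[\Acl^T\P+\P\Acl]\hat{\x}$, and substituting $\u = \Kp\hat{\x}$ into the running cost gives $\x^T\Q\x + \u^T\R\u = \hat{\x}^T[\Q+\Kp^T\R\Kp]\hat{\x}$, so \eqn{condition} becomes $\E{\hat{\x}^T(\Acl^T\P+\P\Acl+\Q+\Kp^T\R\Kp)\hat{\x}} \leq 0$. Inserting the PC parameterization $\hat{\x} = \Phin{n}^T\xpc$ from \eqn{compactxpc} and factoring out the arbitrary deterministic coefficient vector $\xpc$ reduces this to the matrix inequality $\E{\Phin{n}(\Acl^T\P+\P\Acl+\Q+\Kp^T\R\Kp)\Phin{n}^T} \leq 0$.

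The key step is a congruence by $\mY$ on both sides; since $\mY$ is deterministic it moves inside the expectation. The block-diagonal Kronecker structure delivers the commutation identity $\mY\Phin{n} = \Phin{n}\Yp$, and by symmetry $\Phin{n}^T\mY = \Yp\Phin{n}^T$; a direct application of Proposition 1 gives the analogous $\Phin{n}\Wp^T = \mW^T\Phin{m}$. Using these, $\Yp$ passes through $\Phin{n}^T$ on each side and cancels against the inner $\P$ via $\P\Yp=\I{n}$, while $\Kp$ is absorbed into the new variable $\Wp = \Kp\Yp$. Expanding $\Acl = \A+\B\Kp$ and grouping the linear $\A$- and $\B$-terms into $\sym{\mY\Phin{n}\A^T\Phin{n}^T + \mW^T\Phin{m}\B^T\Phin{n}^T}$, while leaving the two quadratic terms $\mY\Phin{n}\Q\Phin{n}^T\mY$ and $\mW^T\Phin{m}\R\Phin{m}^T\mW$, reproduces \eqn{constr} exactly.

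For the objective, the derivation preceding the theorem upper-bounds the cost-to-go by $\E{V(\x(0))} = \E{\x(0)^T\P(\param)\x(0)}$, so minimizing this surrogate calls for $\P$ small, equivalently $\Yp = \P^{-1}$ large; trace maximization of $\E{\Yp}$ is the standard convex surrogate for this, which matches \eqn{cost}. I expect the main obstacle to be the congruence bookkeeping: verifying the $\mY\Phin{n}$ and $\Phin{n}\Wp^T$ commutation identities cleanly from Proposition 1, and confirming that all six cross-terms from the expansion of $\Acl^T\P+\P\Acl$ and $\Kp^T\R\Kp$ repackage precisely into the $\sym\{\cdot\}$ expression of \eqn{constr}. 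Once those identities are in hand, the rest is direct algebraic verification.
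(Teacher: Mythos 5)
Your overall architecture matches the paper's (quadratic Lyapunov function with $\P(\param)=\Y^{-1}(\param)$, the substitution $\W(\param)=\K(\param)\Y(\param)$, Proposition~1 for the commutation identities $\Phin{n}\Y = \mY\Phin{n}$ and $\Phin{n}\W^T = \mW^T\Phin{m}$, and trace maximization of $\E{\Y}$ as the surrogate objective), but the pivotal linearization step is justified by a false claim. You write that the congruence by $\mY$ ``moves inside the expectation'' because $\mY$ is deterministic. It is not: $\mY = \I{N+1}\otimes\Y(\param)$ depends on the random parameter $\param$, so $\mY\,\E{\Phin{n}M\Phin{n}^T}\,\mY \neq \E{\mY\Phin{n}M\Phin{n}^T\mY}$ in general, and knowing $\E{X}\leq 0$ does not let you conclude $\E{\mY X\mY}\leq 0$ for a $\param$-dependent weight $\mY(\param)$ (the sign of an expectation is not preserved under a random congruence of the integrand). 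Since \eqn{constr} has $\mY$ and $\mW$ sitting \emph{inside} the expectation, your derivation as written does not connect \eqn{constr} to the Lyapunov condition in either direction.

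The paper avoids this by performing the transformation pointwise in $\param$, \emph{before} projecting onto the PC basis: it substitutes $\x = \Y(\param)\vo{z}$ into the quadratic form under the expectation, which is an exact identity on the integrand for every realization of $\param$, yielding $\E{\vo{z}^T(\sym{\A\Y+\B\W}+\Y\Q\Y+\W^T\R\W)\vo{z}}\leq 0$; only then is $\vo{z}$ expanded as $\Phin{n}^T\zpc$ (with infinitely many terms, so the representation is exact), and Proposition~1 is applied to the resulting integrand $\Phin{n}\Y(\cdots)\Y\Phin{n}^T$ to produce the $\mY$- and $\mW$-weighted terms inside $\E{\cdot}$. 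To repair your argument you would need to apply the congruence to the integrand $\Phin{n}M\Phin{n}^T$ for each fixed $\param$ (which requires the pointwise condition, not merely the averaged one) --- at which point you have reproduced the paper's change of variables. The rest of your bookkeeping (grouping the cross-terms into $\sym{\cdot}$, the quadratic $\Q$- and $\R$-terms, and the cost argument) is consistent with the paper.
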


\begin{proof}
Let $V(\x):=\x^T\Pp\x, \Pp = \P^T(\param) > 0 \in \real^{n\times n}$. Therefore, \eqn{condition} can be written as
\begin{align*}
\E{\x^T\left(\sym{(\A + \B\K)^T\P} + \Q + \K^T\R\K\right)\x} \leq 0,
\end{align*}
or
\begin{align}
\E{\x^T\left( \sym{\A^T\P + \K^T\B^T\P} + \Q + \K^T\R\K\right)\x} \leq 0.
\eqnlabel{quad1}
\end{align}

Substituting $\P := \Y^{-1}$ and $\x := \Y \vo{z}$, in the above quadratic form, we get 
\begin{align*}
&\x^T\left(\sym{\P\A + \P\B\K}  + \Q + \K^T\R\K\right)\x\\
& = \vo{z}^T \Y \left(\sym{\Y^{-1}\A + \Y^{-1}\B\K} + \Q + \K^T\R\K\right)\Y \vo{z}\\
& = \vo{z}^T \left(\sym{\A\Y + \B\K\Y} + \Y\Q\Y + \Y\K^T\R\K\Y\right) \vo{z}
\end{align*}
With $\W := \K\Y$, the quadratic form simplifies to
$$
\vo{z}^T \left(\sym{\A\Y +  \B\W} + \Y\Q\Y + \W^T\R\W\right) \vo{z}.
$$
Therefore, the condition in \eqn{quad1} is equivalent to 
\begin{align}
\E{\vo{z}^T \left(\mathsf{sym}(\A\Y + \B\W) + \Y\Q\Y + \W^T\R\W\right) \vo{z}} \leq 0.
\eqnlabel{quad2}
\end{align}

Assuming, $\vo{z}(t,\param)$ is a second order process, we can represent $$\vo{z}(t,\param) :=  \sum_{i=0}^\infty \x_i(t)\basis{i}{\param} = \Phin{n}^T(\param)\zpc(t).$$
Note that we have included infinite terms in the polynomial expansion, and thus the representation is exact. 

With $\vo{z} = \Phin{n}^T\zpc$, we get 
\begin{align*}
\zpc^T  \boldsymbol{\mathsf{E}}\left[\sym{\Phin{n}\Y\A^T\Phin{n}^T + \Phin{n}\W^T\B^T\Phin{n}^T} + \Phin{n}\Y\Q\Y\Phin{n}^T +  \Phin{n}\W^T\R\W\Phin{n}^T\right]\zpc\leq 0,
\end{align*}
or 
\begin{align*}
\boldsymbol{\mathsf{E}}\left[\sym{\Phin{n}\Y\A^T\Phin{n}^T + \Phin{n}\W^T\B^T\Phin{n}^T} + \Phin{n}\Y\Q\Y\Phin{n}^T +  \Phin{n}\W^T\R\W\Phin{n}^T\right]\leq 0.
\end{align*}

Using \eqn{prop3} we can write $\Phin{n}\Y = \mY\Phin{n}$ and $\Phin{n}\W^T = \mW^T\Phin{m}$. Substituting them, we get 
\begin{align*}
\boldsymbol{\mathsf{E}}\left[\sym{\mY\Phin{n}\A^T\Phin{n}^T +  \mW^T\Phin{m}\B^T\Phin{n}}  + \mY\Phin{n}\Q\Phin{n}^T\mY  +  \mW^T\Phin{m}\R\Phin{m}^T\mW \right] \leq 0.\end{align*}

Since $\x_0$ is given, with no initial condition uncertainty, the cost function can be written as
$$ \E{\x_0^T\Pp\x_0} = \x_0^T\E{\Pp}\x_0 = \x_0^T\E{\Y^{-1}(\param)}\x_0.$$
Therefore for a given $\x_0$,
\begin{align*}  
\max \tr \E{\Yp} &\implies \min \tr \E{\Pp}\\ & \implies  \min \x_0^T\E{\Pp}\x_0
\end{align*}
\end{proof}

The matrix variables $\Yp$ and $\Wp$ in \eqn{constr} are infinite dimensional.  In this paper, we consider finite dimensional parameterization of $\Yp=\Yp^T>0$ from the literature on sum-of-square (SOS) representation of matrix polynomials \cite{scherer2006matrix}, defined by the following.

\begin{lemma}(Lemma 1 in \cite{scherer2006matrix})\label{lemYb}
The polynomial matrix $\Yp$ of dimension $n\times n$ is SOS with respect to the monomial basis $\mo{\Phi}(\param)$ iff there exists a symmetric matrix $\Yb$ such that
\begin{align}
\Yp = \left(\mo{\Phi}(\param) \otimes \I{n} \right)^T \Yb \left(\mo{\Phi}(\param) \otimes \I{n}\right)  = \Phin{n}^T(\param)\Yb\Phin{n}(\param), \text{ and } \Yb \in \real^{n(N+1)\times n(N+1)} \geq 0.
\eqnlabel{lemYb}
\end{align}
\end{lemma}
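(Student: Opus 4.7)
The plan is to prove both directions of the equivalence using the fundamental factorization property of positive semidefinite matrices. The statement is a classical Gram-matrix characterization of SOS polynomial matrices, so the key tool throughout is that $\Yb \succeq 0$ if and only if $\Yb = \mo{L}^T\mo{L}$ for some matrix $\mo{L}$ (e.g., via Cholesky or eigendecomposition).

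For the $(\Leftarrow)$ direction, I would start by assuming such a symmetric $\Yb \geq 0$ exists. I would factor $\Yb = \mo{L}^T\mo{L}$, so that
\begin{align*}
\Yp = \Phin{n}^T(\param)\,\mo{L}^T\mo{L}\,\Phin{n}(\param) = \bigl(\mo{L}\Phin{n}(\param)\bigr)^T\bigl(\mo{L}\Phin{n}(\param)\bigr).
\end{align*}
Writing $\mo{S}(\param) := \mo{L}\Phin{n}(\param)$, this exhibits $\Yp$ as $\mo{S}^T\mo{S}$, which is the definition of an SOS matrix polynomial (a sum, here a single term, of squares of polynomial matrices) with respect to the chosen monomial basis.

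For the $(\Rightarrow)$ direction, I would assume $\Yp$ is SOS with respect to $\mo{\Phi}(\param)$, meaning $\Yp = \sum_{k=1}^{r} \mo{S}_k^T(\param)\mo{S}_k(\param)$ where each $\mo{S}_k(\param)$ is a polynomial matrix whose entries lie in the span of the basis encoded in $\Phin{n}(\param)$. Then each $\mo{S}_k$ can be represented as $\mo{S}_k(\param) = \mo{C}_k \Phin{n}(\param)$ for some constant coefficient matrix $\mo{C}_k$. Substituting and factoring out $\Phin{n}(\param)$ on both sides yields
\begin{align*}
\Yp = \sum_{k=1}^{r} \Phin{n}^T(\param)\mo{C}_k^T\mo{C}_k\Phin{n}(\param) = \Phin{n}^T(\param)\,\Yb\,\Phin{n}(\param),
\end{align*}
with $\Yb := \sum_{k=1}^{r}\mo{C}_k^T\mo{C}_k$, which is symmetric and positive semidefinite by construction. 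Symmetry is automatic from the construction, and it has the correct dimension $n(N+1)\times n(N+1)$ because $\Phin{n}(\param)\in\real^{n(N+1)\times n}$.

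The main subtlety is the well-definedness of $\Yb$: in general the representation $\Yp = \Phin{n}^T\Yb\Phin{n}$ is not unique because $\Phin{n}(\param)$ may satisfy linear dependencies among entries of $\Phin{n}\Phin{n}^T$ as polynomials in $\param$. However, the lemma only asserts existence of some such $\Yb$, not uniqueness, so this non-uniqueness is not an obstruction; the construction above produces a valid $\Yb \geq 0$. Since the result is quoted verbatim from \cite{scherer2006matrix}, I would simply refer the reader there for a more detailed treatment and use the two-line factorization argument above as the working justification.
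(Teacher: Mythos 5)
Your proof is correct: both directions of the Gram-matrix characterization go through exactly as you describe (factor $\Yb=\mo{L}^T\mo{L}$ for sufficiency, collect $\sum_k\mo{C}_k^T\mo{C}_k$ for necessity), and your remark that only existence, not uniqueness, of $\Yb$ is required disposes of the one genuine subtlety. The paper offers no argument of its own here --- its ``proof'' is just the citation to Lemma 1 of Scherer and Hol --- so your factorization argument is precisely the standard proof of the cited result and is consistent with what the paper relies on.
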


\begin{proof}
See Lemma 1 in \cite{scherer2006matrix}.
\end{proof}
Therefore, $\Yp = \Yp^T>0, \;\forall \param \;\Leftrightarrow\; \Yb = \Yb^T > 0$. 

\begin{corollary}
$\Yb$ in \eqn{lemYb} admits a linear space of $n(n+1)(N+1)(N+2)/4$.
\end{corollary}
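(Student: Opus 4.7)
The plan is to identify the linear space in question with a space of symmetric matrix-valued polynomials in the products $\phi_i(\param)\phi_j(\param)$ and count the free parameters. First I would partition $\Yb$ into $n\times n$ blocks $\Yb_{ij}$, $i,j\in\{0,\ldots,N\}$, so that \eqn{lemYb} unfolds into
\begin{align*}
\Yp = \Phin{n}^T(\param)\Yb\Phin{n}(\param) = \sum_{i,j=0}^{N}\phi_i(\param)\phi_j(\param)\,\Yb_{ij}.
\end{align*}
The symmetry $\Yb=\Yb^T$ forces $\Yb_{ji}=\Yb_{ij}^T$; in particular each diagonal block $\Yb_{ii}$ is symmetric. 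Next I would collect terms by \emph{unordered} pair $(i,j)$, using that the scalars $\phi_i\phi_j$ commute, to obtain
\begin{align*}
\Yp = \sum_{0\le i\le j\le N}\phi_i(\param)\phi_j(\param)\,C_{ij}, \qquad C_{ij}=C_{ij}^T\in\real^{n\times n},
\end{align*}
where $C_{ii}=\Yb_{ii}$ and $C_{ij}=\Yb_{ij}+\Yb_{ij}^T$ for $i<j$. Conversely, any choice of symmetric $C_{ij}$ can be realized by a symmetric $\Yb$, so this yields a free parameterization of the admissible $\Yp$ by one symmetric $n\times n$ matrix per unordered index pair.

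The count is then immediate: there are $(N+1)(N+2)/2$ unordered pairs $(i,j)$ with $0\le i\le j\le N$, and the space of symmetric $n\times n$ matrices has dimension $n(n+1)/2$, giving
\begin{align*}
\frac{(N+1)(N+2)}{2}\cdot\frac{n(n+1)}{2}=\frac{n(n+1)(N+1)(N+2)}{4}.
\end{align*}
The one subtle point I would flag rather than try to sharpen is interpretation: this count treats the monomial products $\phi_i\phi_j$ as linearly independent. When the $\phi_i$ are taken as orthogonal polynomials of degree $i$ (as in the Askey scheme used throughout the paper), linear relations can arise among products sharing a total degree, so for $n,N\ge 1$ the number $n(n+1)(N+1)(N+2)/4$ is strictly smaller than the raw count $n(N+1)\bigl(n(N+1)+1\bigr)/2$ of symmetric $\Yb$'s and should be read as the effective dimension of the SOS parameterization.
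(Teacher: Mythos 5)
Your proposal is correct and follows essentially the same route as the paper: partition $\Yb$ into $n\times n$ blocks, use $\Yb_{ji}=\Yb_{ij}^T$ together with the symmetry of $\Yp$ to reduce to one symmetric $n\times n$ block per unordered index pair, and multiply $(N+1)(N+2)/2$ pairs by $n(n+1)/2$ parameters each. Your closing caveat about linear dependence among the products $\phi_i\phi_j$ is a fair observation that the paper glosses over, but it does not change the argument.
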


\begin{proof}
Partition $\Yb$ as 
\begin{align}
\Yb := \begin{bmatrix}\Yb_{00} &\cdots & \Yb_{0N} \\
\vdots &  & \vdots \\
\Yb_{N0} &\cdots &\Yb_{NN}\\
\end{bmatrix}, \eqnlabel{Ybar}
\end{align}
where $\Yb_{ij} = \Yb_{ji}^T \in \real^{n\times n}$. Therefore,
\begin{align}
\Yp &:=\Phin{n}^T\Yb\Phin{n} = \sum_{ij}\phi_i(\param)\phi_j(\param)\Yb_{ij}. 
\eqnlabel{simpleYb}
\end{align}

But, $\Yp = \Yp^T$
\begin{align}
\implies \Yb_{ij} = \Yb_{ij}^T. \eqnlabel{Yij}
\end{align}

Combining \eqn{Ybar} and \eqn{Yij} we observe that $\Yb$ admits a linear space of dimension $n(n+1)(N+1)(N+2)/4$.
\end{proof}

Matrix variable $\W(\param)$ is parameterized to be linearly dependent on polynomial chaos basis functions $\phi_i(\param)$, i.e.
\begin{align}
\Wp = \sum_{i=0}^N \W_i\phi_i(\param).
\end{align}

From \eqn{simpleYb}, we can write
\begin{align}
\Yp &= \sum_{ij}\phi_i(\param)\phi_j(\param)\Yb_{ij}, \eqnlabel{Ypsum} \\
&= \left(\mo{\psi}^T(\param) \otimes \I{n} \right)\VYb = \mo{\psi}^T_n(\param)\VYb,
\end{align}
where $\mo{\psi}^T_n(\param) := \mo{\psi}^T(\param) \otimes \I{n}$,
$$
\mo{\psi}(\param):= \begin{bmatrix} \phi_0^2(\param) \\ 2\phi_1(\param)\phi_0(\param)\\ \vdots \\ 2\phi_N(\param)\phi_{N-1}(\param) \\ \phi_N^2(\param) \end{bmatrix}
$$
and
\begin{align}
\VYb:= \begin{pmatrix} \Yb_{00} \\ \Yb_{10} \\ \vdots \\ \Yb_{N(N-1)} \\ \Yb_{NN}\end{pmatrix}\eqnlabel{VYdef}
\end{align}

Matrix variable $\Wp$ can be written in similar form,
\begin{align}
 \Wp &= \sum_{i=0}^N \W_i \phi_i(\param) = \begin{bmatrix} \phi_0(\param)\I{m} & \cdots & \phi_N(\param)\I{m}\end{bmatrix}\underbrace{\begin{pmatrix}\W_0 \\\vdots \\\W_N \end{pmatrix}}_{\VW}  =  \Phin{m}^T(\param)\VW. \eqnlabel{VWdef}
 \end{align}

From the definition of $\mY$
\begin{align}
\mY &:= \I{N+1}\otimes\Yp = \I{N+1} \otimes\Big(\mo{\psi}^T_n(\param) \VYb\Big) = \Big(\I{N+1} \otimes\ \mo{\psi}^T_n(\param)\Big)(\I{N+1}\otimes\VYb),\nonumber \\
&= \Big(\I{N+1} \otimes\ \mo{\psi}^T_n(\param)\Big)\mVYb.
 \eqnlabel{mY2}
\end{align}

Similarly,
\begin{align} \mW &= \I{N+1}\otimes \left(\Phin{m}^T(\param)\VW\right) = \Big(\I{N+1}\otimes \Phin{m}^T(\param)\Big)\mVW.
\eqnlabel{mW2}\end{align} 

We next present the synthesis algorithm for the particular parameterization considered here.
\begin{theorem}\label{thm:Galerkin}
Controller gain
$$\Kp = \left(\sum_{i=0}^N \W_i\phi_i(\param)\right)\left(\Phin{n}^T(\param)\Yb\Phin{n}(\param)\right)^{-1},$$ minimizes \eqn{costFcn} if matrices $\Yb = \Yb^T > 0 \in \real^{n(N+1)\times n(N+1)}$ and $\W_i\in\real^{m\times n}$,  are the solution of the following optimization problem


\begin{align}
& \max_{\Yb,\W_i} \tr \left(\sum_{i=0}^N \E{\phi_i^2(\param)}\Yb_{ii}\right) \notag\\[3mm]
&\text{subject to } \\ &\begin{bmatrix}
\sym{\mVYb^T \M_1 + \mVW^T \M_2} & \mVYb^T\sqrt{\M_3} & \mVW^T\sqrt{\M_4}\\[1mm]
\sqrt{\M_3}\mVYb  & -\I{} &  \mo{0} \\[1mm]
\sqrt{\M_4}\mVW  &\mo{0} & -\I{}
\end{bmatrix} \leq 0,
\eqnlabel{lmi1}
\end{align}

%

where $\mVYb := \I{N+1}\otimes \VYb, \mVW := \I{N+1} \otimes \VW$, $\VYb, \VW$ are functions of $\Yb$ and $\W_i$ defined in \eqn{VYdef} and \eqn{VWdef} respectively,
\begin{align}
\M_1 & := \E{(\I{N+1}\otimes \mo{\psi}_n)\Phin{n}\A^T\Phin{n}^T},\label{eq:M1}\\
\M_2 & := \E{(\I{N+1}\otimes \Phin{m})\Phin{m}\B^T\Phin{n}^T},\\
\M_3 & := \E{(\I{N+1}\otimes \mo{\psi}_n)\Phin{n}\Q\Phin{n}^T(\I{N+1}\otimes \mo{\psi}^T_n)}, \\
\M_4 & := \E{(\I{N+1}\otimes \Phin{m}) \Phin{m}\R\Phin{m}^T (\I{N+1}\otimes \Phin{m}^T)}\label{eq:M4},
\end{align}
and $\sqrt{\M_3},\sqrt{\M_4}$ are the principal square roots of the respective matrices.
\label{thm:lmi_galerkin}
\end{theorem}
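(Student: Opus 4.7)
The plan is to start from the constraint \eqn{constr} of Theorem \ref{thmMain} and substitute the SOS parameterizations \eqn{mY2} and \eqn{mW2}. Since $\Yp$ (and hence $\mY$) is symmetric, we can also write $\mY = \mVYb^T(\I{N+1}\otimes\mo{\psi}_n)$ and $\mW^T = \mVW^T(\I{N+1}\otimes\Phin{m})$. Substituting these into each term of \eqn{constr} pulls $\mVYb^T$ and $\mVW^T$ out of the expectation operator as constant factors, so that the bilinear terms collapse to $\mVYb^T\M_1$ and $\mVW^T\M_2$ with $\M_1,\M_2$ as defined in \eqref{eq:M1}, and the two quadratic terms collapse to $\mVYb^T\M_3\mVYb$ and $\mVW^T\M_4\mVW$ with $\M_3,\M_4$ as in \eqref{eq:M4}. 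The constraint is then
$$\sym{\mVYb^T\M_1 + \mVW^T\M_2} + \mVYb^T\M_3\mVYb + \mVW^T\M_4\mVW \leq 0.$$

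Next I would linearize this inequality. Because $\Q\succeq 0$ and $\R\succeq 0$, both $\M_3$ and $\M_4$ are positive semi-definite (each is an expectation of a congruence of a PSD matrix), so their principal square roots exist. Writing the two quadratic terms as $(\sqrt{\M_3}\mVYb)^T(\sqrt{\M_3}\mVYb) + (\sqrt{\M_4}\mVW)^T(\sqrt{\M_4}\mVW)$ and applying the Schur complement once for each quadratic term produces precisely the $3\times 3$ block inequality \eqn{lmi1}. Positivity of $\Yp$ is taken care of by Lemma \ref{lemYb} and the requirement $\Yb=\Yb^T>0$.

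For the objective, I would use orthogonality of the basis: from \eqn{Ypsum}, $\E{\Yp} = \sum_{i,j}\E{\phi_i\phi_j}\,\Yb_{ij} = \sum_{i=0}^N \E{\phi_i^2}\,\Yb_{ii}$, so that $\tr\E{\Yp} = \sum_{i=0}^N \E{\phi_i^2}\,\tr\Yb_{ii}$, matching the maximization in the statement. Combined with the reduction $\tr\E{\Pp}\mapsto\tr\E{\Yp}$ already established in Theorem \ref{thmMain}, this certifies the bound on the cost-to-go. The main obstacle is really the algebraic bookkeeping in the first step: one must carefully use Proposition 1 and the symmetry of $\mY$ so that the SOS coefficient vectors $\mVYb$ and $\mVW$ appear as outer factors that can be extracted from $\E{\cdot}$; the rest is a routine Schur complement together with an orthogonality argument.
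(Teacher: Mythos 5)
Your proposal is correct and follows essentially the same route as the paper's proof: orthogonality of the $\phi_i$ to reduce the objective to $\sum_i \E{\phi_i^2}\Yb_{ii}$, substitution of the parameterizations \eqn{mY2} and \eqn{mW2} into \eqn{constr} so that $\mVYb$ and $\mVW$ factor out of the expectations, and a Schur complement to obtain \eqn{lmi1}. The only difference is that you spell out the positive semidefiniteness of $\M_3$ and $\M_4$ justifying the principal square roots, which the paper leaves implicit.
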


\begin{proof}
Recall from \eqn{Ypsum}, $\Yp = \sum_{ij}\phi_i(\param)\phi_j(\param)\Yb_{ij}$. Noting that $\E{\phi_i(\param)\phi_j(\param)} = 0$, for $i\neq j$, the cost function in \eqn{cost} is then
\begin{align}
\tr \E{\Yp} = \tr \Big(\sum_{i=0}^N \E{\phi_i^2(\param)}\Yb_{ii}\Big).
\end{align}

From \eqn{mY2} and \eqn{mW2}, we can substitute $\mY$ and $\mW$ in \eqn{constr} to get
\begin{align}
&\mathsf{sym}\left(\mVYb^T \E{(\I{N+1}\otimes \mo{\psi}_n)\Phin{n}\A^T\Phin{n}^T} +  \mVW^T\E{(\I{N+1}\otimes \Phin{m})\Phin{m}\B^T\Phin{n}^T}\right) + \nonumber \\
&\mVYb^T \E{(\I{N+1}\otimes \mo{\psi}_n)\Phin{n}\Q\Phin{n}^T(\I{N+1}\otimes \mo{\psi}_n^T)}\mVYb + 
\mVW^T \E{(\I{N+1}\otimes \Phin{m})\Phin{m}\R\Phin{m}^T(\I{N+1}\otimes \Phin{m}^T)}\mVW \leq 0. 
\end{align}
Applying Schur complement we get the LMI in \eqn{lmi1}.
\end{proof}

\subsection{Stochastic Collocation Based Formulation}
In this section, we solve the synthesis problem derived in theorem \ref{thmMain} in the stochastic collocation framework. In this framework, we can parameterize the matrix variables in \eqn{quad2} as
\begin{align}
&\vo{z}(\param) = \Ln{n}(\param)^T\zsc,\label{eq:z_sc}\\
&\Y(\param)=\Ln{n}^T(\param)\Yt\Ln{n}(\param),\label{eq:Y_sc}\\
&\W(\param)=\Ln{m}^T(\param)\vo{V_{\tilde{\W}}}\label{eq:W_sc},
\end{align}
where $\Ln{n}=\begin{bmatrix}
L_0(\param)\\\vdots\\L_N(\param)
\end{bmatrix}
\otimes \I{n}$, $\Yt=
\begin{bmatrix}
\Yt_{00} & \cdots & \Yt_{0N}\\
\vdots & \ddots & \vdots\\
\Yt_{N0} & \cdots & \Yt_{NN}
\end{bmatrix}
$, and $
\vo{V}_{\tilde{\W}}=
\begin{bmatrix}
\tilde{\W}_0\\
\vdots\\
\tilde{\W}_N
\end{bmatrix}
$. Based on this parameterization, we have the following optimization problem for synthesis.

\begin{theorem} \label{thm:SC}
Controller gain
$$\Kp = \left(\Ln{m}^T(\param)\vo{V_{\tilde{\W}}}\right)\left(\Ln{n}^T(\param)\Yt\Ln{n}(\param)\right)^{-1},$$ minimizes \eqn{costFcn} if $\exists$ matrices $\Yt = \Yt^T > 0 \in \real^{n(N+1)\times n(N+1)}$ and $\tilde{\W}_i\in\real^{m\times n}$,  that solves the following optimization problem

\begin{align}
& \max_{\Yb,\W_i} \tr \left(\sum_{i=0}^N \E{\Ln{in}}\Yt_{ii}\right) \notag\\[3mm]
&\text{subject to } \\
&\begin{bmatrix}
\vo{M}_{11,i} & \vo{M}_{12,i} & \vo{M}_{13,i}\\[1mm]
\vo{M}^T_{12,i}  & -\I{} &  \mo{0} \\[1mm]
\vo{M}^T_{13,i}  &\mo{0} & -\I{}
\end{bmatrix} \leq 0 \quad \text{for }i=0,1,\cdots,N;
\eqnlabel{lmi_sc}
\end{align}

where 
\begin{align}
\vo{M}_{11,i} &:= \sym{\Yt_{ii}^T \E{\Ln{in}}\A^T(\param_i) + \tilde{\W}_i^T \B^T(\param_i)},\\
\vo{M}_{12,i} &:= \Yt_{ii}^T\sqrt{\E{\Ln{in}}\Q},\\
\vo{M}_{12,i} &:=  \tilde{\W}_i^T\sqrt{\E{\Ln{in}}\R},
\end{align}
$\Ln{in}=L_i\otimes\I{n}$, and $\param_i$ are the roots of the polynomial chaos basis of degree $N+1$.
\label{thm:lmi_sc}
\end{theorem}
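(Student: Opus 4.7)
The plan is to mirror the proof of Theorem \ref{thm:Galerkin}, but replace the polynomial chaos expansion with the Lagrange interpolant parameterization in \eqref{eq:z_sc}--\eqref{eq:W_sc} and exploit the collocation identity in Lemma \ref{lem:gaussian} to decouple the LMI at the nodes $\param_i$. So I start from the quadratic sufficient condition \eqn{quad2} established in the proof of Theorem \ref{thmMain}, namely
\[
\E{\vo{z}^T\bigl(\sym{\A\Y + \B\W} + \Y\Q\Y + \W^T\R\W\bigr)\vo{z}} \leq 0,
\]
which is independent of whether we use Galerkin projection or collocation to parameterize the matrix variables.

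Next I substitute $\vo{z} = \Ln{n}^T\zsc$, $\Y = \Ln{n}^T\Yt\Ln{n}$, and $\W = \Ln{m}^T \vo{V}_{\tilde{\W}}$. Since $\zsc$ is an arbitrary coefficient vector, the inequality reduces to a matrix inequality in terms of expectations of products $\E{l_i(\param)l_j(\param)\,g(\param)}$ where $g$ is either $\A(\param)$, $\B(\param)$, $\Q$, or $\R$. By Lemma \ref{lem:gaussian}, assuming $\A(\param)$ and $\B(\param)$ are affine in $\param$ (which fits the LPV setting), each such expectation vanishes for $i\neq j$ and equals $\E{l_i(\param)}g(\param_i)$ for $i=j$. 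This is the decisive step: the double sum over collocation indices collapses to a single sum, so the matrix inequality decomposes block-diagonally into $N+1$ independent inequalities, one per collocation node $\param_i$. Writing out the $i$-th diagonal block using $\Ln{in}=l_i\otimes\I{n}$ gives exactly
\[
\sym{\Yt_{ii}\E{\Ln{in}}\A^T(\param_i) + \tilde{\W}_i^T\B^T(\param_i)} + \Yt_{ii}\E{\Ln{in}}\Q\,\Yt_{ii} + \tilde{\W}_i^T\E{\Ln{in}}\R\,\tilde{\W}_i \leq 0,
\]
and a Schur complement on the quadratic terms in $\Yt_{ii}$ and $\tilde{\W}_i$ yields the LMI \eqn{lmi_sc}.

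For the cost, I apply the same reasoning: $\tr\E{\Yp} = \tr\E{\Ln{n}^T\Yt\Ln{n}} = \sum_{i,j}\E{l_i l_j}\tr\Yt_{ij}$, and the orthogonality of the Lagrange interpolants already proved in the text ($\E{l_il_j}=0$ for $i\neq j$) reduces this to $\sum_i \E{\Ln{in}}\,\Yt_{ii}$ up to the trace. Positivity of $\Yp$ for all $\param$ is enforced pointwise by requiring the block diagonal entries $\Yt_{ii}>0$, which follows from $\Yt=\Yt^T>0$ as a whole.

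The main obstacle I anticipate is bookkeeping around the affineness assumption required by Lemma \ref{lem:gaussian}: the Gaussian quadrature identity is exact only when $l_il_jg$ has degree at most $2N+1$, so I need to verify that every expectation appearing in the substitution (including the ones involving $\Q$ and $\R$, which are constant and therefore trivially affine, and the cross terms with $\A(\param)$ and $\B(\param)$) fits within that degree budget. Once this is confirmed, the rest is the same algebraic manipulation --- quadratic form cancellation in $\zsc$, followed by Schur complement --- used in Theorem \ref{thm:Galerkin}, and the claim follows.
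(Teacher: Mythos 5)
Your proposal follows essentially the same route as the paper's proof: substitute the collocation parameterizations \eqref{eq:z_sc}--\eqref{eq:W_sc} into the quadratic condition \eqn{quad2}, invoke Lemma \ref{lem:gaussian} to collapse the cross terms and decouple the inequality into $N+1$ node-wise constraints (using $l_j(\param_i)=\delta_{ij}$ so that $\Ln{n}^T\Yt\Ln{n}$ evaluates to $\Yt_{ii}$ at node $i$), and finish with a Schur complement; the cost reduction via Lagrange-interpolant orthogonality is likewise identical. Your remark about the quadrature degree budget is a fair caveat that the paper itself only acknowledges implicitly through the $\approx$ signs, but it does not change the argument.
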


\begin{proof}
Substituting \eqref{eq:Y_sc} into \eqn{cost} and applying Lemma \ref{lem:gaussian}, we have
\begin{align*}
&\tr \E{\Yp}=\tr \E{\Ln{n}^T(\param)\Yt\Ln{n}(\param)}\\
&=\tr(\E{\sum_{i=0}^N\sum_{j=0}^N\Ln{in}\Yt_{ij}\Ln{jn}})\\
&\approx \tr(
\E{\Ln{0n}}\Yt_{00}+\E{\Ln{1n}}\Yt_{11}+\cdots+\E{\Ln{Nn}}\Yt_{NN}
)\\
&=\tr(\sum_{i=0}^{N} \E{\Ln{in}}\Yt_{ii}),
\end{align*}
which is the cost function we have to maximize. Then, substituting \eqref{eq:z_sc}-\eqref{eq:W_sc} into \eqn{quad2} yields

\begin{align*}
 \boldsymbol{\mathsf{E}}\left[ \zsc^T\Ln{n} \left( \sym{\Ln{n}^T\Yt\Ln{n}\A^T  + \vo{V}_{\tilde{\W}}^T\Ln{m}\B^T}\right.+ \left. \Ln{n}^T\Yt\Ln{n}\Q\Ln{n}^T\Yt\Ln{n} + \vo{V}_{\tilde{\W}}^T\R\Ln{m}^T\vo{V}_{\tilde{\W}}\right)\Ln{n}^T\zsc\right]  \leq 0
\end{align*}
or
\begin{align}
 \boldsymbol{\mathsf{E}}\left[\Ln{n} \left( \sym{\Ln{n}^T\Yt\Ln{n}\A^T + \vo{V}_{\tilde{\W}}^T\Ln{m}\B^T}  \right.+\left.\Ln{n}^T\Yt\Ln{n}\Q\Ln{n}^T\Yt\Ln{n} + \vo{V}_{\tilde{\W}}^T\R\Ln{m}^T\vo{V}_{\tilde{\W}}\right) \Ln{n}^T\right] \leq 0.\eqnlabel{quadsc}
\end{align}

Applying the Lemma \ref{lem:gaussian}, \eqn{quadsc} can be represented as
\begin{align}
\E{
\begin{matrix}
\Ln{0n}&&\\
&\ddots&\\
&&\Ln{Nn}
\end{matrix}
}
\begin{bmatrix}
\vo{G}_0&&\\
&\ddots&\\
&&\vo{G}_N
\end{bmatrix} \leq 0,
\eqnlabel{lmi_sc1}
\end{align}
where
\begin{align*}
\vo{G}_i &\approx
\sym{\Ln{n,i}^T\Yt\Ln{n,i}\A^T(\param_i) + \vo{V}_{\tilde{\W}}^T\Ln{m,i}\B^T(\param_i)} + \Ln{n,i}^T\Yt\Ln{n,i}\Q\Ln{n,i}^T\Yt\Ln{n,i} + \vo{V}_{\tilde{\W}}^T\R\Ln{m,i}^T\vo{V}_{\tilde{\W}},\\
&=\sym{\Yt_{ii}\A^T(\param_i) + \tilde{\W}_i^T\B^T(\param_i)} + \Yt_{ii}\Q\Yt_{ii} + \tilde{\W}_i^T\R\tilde{\W}_i.
\end{align*}

We use the notations $\Ln{n,i}=\Ln{n}(\param_i)$ and $\Ln{m,i}=\Ln{m}(\param_i)$ to simplify the above expressions. Since \eqn{lmi_sc1} is in a diagonal form, it can be separated into $N+1$ independent constraints.

\begin{align}
\E{\Ln{in}}\left[\sym{\Yt_{ii}\A^T(\param_i) + \tilde{\W}_i^T\B^T(\param_i)}+ \Yt_{ii}\Q\Yt_{ii} + \tilde{\W}_i^T\R\tilde{\W}_i\right] \leq 0 \quad \text{for }i=0,\cdots,N; 
\end{align}
or 
\begin{align}
\sym{\Yt_{ii}\E{\Ln{in}}\A^T(\param_i)  + \tilde{\W}_i^T\E{\Ln{in}}\B^T(\param_i)} + \Yt_{ii}\E{\Ln{in}}\Q\Yt_{ii} + \tilde{\W}_i^T\E{\Ln{in}}\R\tilde{\W}_i \leq 0,
\eqnlabel{quad_sc}
\end{align}
for $i=0,\cdots, N$. Applying Schur complement to \eqn{quad_sc} we obtain $N+1$ final LMIs as \eqn{lmi_sc}.

\end{proof}

\subsection{Stability Concern Due to Finite Term Polynomial Chaos Expansion}
Theorem \ref{thmMain} presents the optimization problem for synthesis assuming infinite term polynomial chaos expansion of $\x(t,\param)$. There are no approximations in that problem formulation. However, we solve this problem using finite terms in the expansion, for both Galerkin projection and stochastic collocation framework. The problem formulations in theorems \ref{thm:Galerkin} and \ref{thm:SC} are based on finite term expansion of $\x(t,\param) \approx \hat{\x}(t,\param) := \sum_{i=0}^N \x_i(t)\phi_i(\param)$. Therefore, optimal control of $\hat{\x}(t,\param)$ does not necessarily imply optimal control of $\x(t,\param)$. In fact, we cannot conclude $\lim_{t\to\infty}\Exp{\|\hat{\x}(t,\param)\|^2_2} \to 0 \implies \lim_{t\to\infty}\Exp{\|\x(t,\param)\|_2^2} \to 0$. That is, we can cannot guarantee exponential mean square stability (EMS) of $\x(t,\Delta)$ from the EMS stability of $\hat{\x}(t,\Delta)$. 

To circumvent this problem, we guarantee stability of $\x(t,\Delta)$ in the \textit{worst-case} sense by imposing the following additional constraints,
\begin{align}\sym{\A(\param_\text{wc})\Y(\param_\text{wc})  + \B(\param_\text{wc})\W(\param_\text{wc})} < 0,
\end{align}
where $\param_\text{wc}$ represents the worst-case values from $\domain{\param}$.

Therefore, the results presented in this paper can be interpreted as synthesis of parameter dependent gain $\K(\param)$ that stabilizes the system in \eqn{lpv} in the worst-case sense and optimizes the performance, using theorems \ref{thm:Galerkin} and \ref{thm:SC}, based on the first $N$ modes of $\x(t,\Delta)$.

\section{Example}
We next consider an autopilot design for a nonlinear missile model \cite{wu1995lpv} using the results presented in this paper and benchmark it with existing techniques. The dynamics of the missile model is given by

\begin{align}
&\dot{\alpha}=K_{\alpha}MC_n(\alpha,\delta)\cos(\alpha)+q,\\
&\dot{q}=K_{q}M^2C_m(\alpha,\delta),
\end{align}
where
\begin{align*}
&C_n(\alpha,\delta,M)=\alpha\left[a_n|\alpha|^2+b_n|\alpha|+c_n\left(2-\frac{M}{3}\right)\right]+d_n\delta,\\
&C_m(\alpha,\delta,M)=\alpha\left[a_m|\alpha|^2+b_m|\alpha|+ c_m\left(-7+\frac{8M}{3}\right)\right]+ d_m\delta,
\end{align*}
are the aerodynamic coefficients, $\alpha$ is angle of attack in degrees, $q$ is pitch rate in degrees per second, $\delta$ is tail deflection  angle in degrees, and $M=2.5$ is Mach number. For simplicity, we consider $M$ to be constant. The system parameters are defined in \cite{wu1995lpv}. We transform the nonlinear dynamics to a quasi-LPV system by introducing $\rho:=\alpha$,
\begin{align}
\begin{bmatrix}
\dot{\alpha}\\
\dot{q}
\end{bmatrix}
= 
\begin{bmatrix}
K_{\alpha}M\left[a_n|\rho|^2+b_n|\rho|+c_n\left(2-\frac{M}{3}\right)\right]\cos(\rho) & 1\\
K_qM^2\left[a_m|\rho|^2+b_m|\rho|+c_m\left(-7+\frac{8M}{3}\right)\right] & 0
\end{bmatrix}
\begin{bmatrix}
\alpha\\q
\end{bmatrix} + 
\begin{bmatrix}
K_{\alpha}Md_n\cos{\rho}\\
K_qM^2d_m
\end{bmatrix}
\delta.
\label{eq:missile_lpv}
\end{align}

The objective is to design a full state feedback controller $\K(\rho)$ that stabilizes the missile system such that $-20^{\circ}\leq\alpha\leq 20^{\circ}$ while minimizing the cost-to-go function with
\begin{align*}
\Q=\begin{bmatrix}
0.2&0\\0&0.2
\end{bmatrix},
\quad \R=1.
\end{align*}
We design four different control systems: $\K_{\text{LTI}}$, $\K_{\text{LPV}}$,$\K_{\text{pcLPV}}$,$\K_{\text{scLPV}}$, which are synthesized as follows.
\begin{itemize}
\item $\K_{\text{LTI}}$
\item[] We linearize the nonlinear dynamics about $(0, 0)$ to get $(\A_{\text{LTI}},\B_{\text{LTI}})$ and the controller $\K_{\text{LTI}}=
\W_{\text{LTI}}\Y_{\text{LTI}}^{-1}$ is obtained by solving the optimization problem:
\begin{align*}
&\max_{\Y_\text{LTI},\W_\text{LTI}}  \tr \Y_\text{LTI}, \text{ subject to} \\
&\begin{bmatrix}
\sym{\Y_\text{LTI}\A_\text{LTI}^T +  \W_\text{LTI}^T\B_\text{LTI}^T}  & \Y_\text{LTI} & \W^T_\text{LTI} \\
\Y_\text{LTI} & -\Q^{-1} & \mo{0} \\
\W_\text{LTI} & \mo{0} & -\R^{-1}
\end{bmatrix} \leq 0.
\end{align*}
\item $\K_{\text{LPV}}$
\item[] According to the LPV system \eqref{eq:missile_lpv}, we  set $ \Y_\text{LPV}(\rho) :=\Y_0 + \rho \Y_1 > 0, \Y_i = \Y_i^T$, and $\W_\text{LPV}(\rho) :=\W_0 + \rho \W_1$, and the controller $\K_{\text{LPV}}=\W_{\text{LPV}}\Y_{\text{LPV}}^{-1}$ is obtained by solving the optimization problem below with 2, 20, 50, 100 sample points.
\begin{align*}
&\max_{\Y_0,\Y_1,\W_0,\W_1}  \tr (\Y_0 + \Y_1), \text{ subject to} \\
&\begin{bmatrix}
\begin{matrix}\mathsf{sym}\left(\Y(\rho_k)\A^T(\rho_k)\right.\\  \left.\W^T(\rho_k)\B^T(\rho_k)\right) \end{matrix} & \Y(\rho_k) & \W^T(\rho_k) \\
\Y(\rho_k) & -\Q^{-1} & \mo{0} \\
\W(\rho_k) & \mo{0} & -\R^{-1}
\end{bmatrix} \leq 0, 
\end{align*}
for $\rho_k \in [-20, 20]$.
The classical LPV synthesis algorithm is recovered when $\rho_k = \{-20, 20\}$, i.e. $\rho_k$ takes the  extreme values.
\item $\K_{\text{pcLPV}}$
\item[] We assume the parameter $\rho$ is a random variable uniformly distributed over $[-20, 20]$, so we define $\rho\equiv\param\in \uniform{[-20,20]}$ and substitute it into \eqref{eq:missile_lpv}. From theorem \ref{thm:lmi_galerkin}, the controller $\K_{\text{pcLPV}}$ is obtained with 3rd, 4th, and 5th order polynomial chaos expansion.
\item $\K_{\text{scLPV}}$ from theorem \ref{thm:lmi_sc}, the controller $\K_{\text{scLPV}}$ is obtained with 5th, 9th, and 12th polynomial chaos expansion.
\end{itemize}

The controllers were synthesized in \texttt{MATLAB} \cite{matlab} using \texttt{CVX} \cite{grant2008cvx}.

\Fig{all} shows the comparison of performance between different control systems, and table \ref{table1} compares the synthesis time and cost-to-go, over $t\in[0, 20]$ seconds, for each controller. From \fig{all} and table \ref{table1}, we observe that $\K_{\text{scLPV}}$ achieves the best tradeoff between synthesis time and closed-loop performance.

\Fig{LPV}, shows the performance of $\K_{\text{LPV}}$ for various number of samples included in the synthesis. For two samples, the trajectories do not converge to zero, hence the cost-to-go is infinite. In table \ref{table1}, the cost for $\K_{\text{LPV}}$, over $t\in[0,20]$ seconds, is worse than $\K_{\text{LTI}}$, but improves as number of samples are increased. The computational time also increases  with sample size. \Fig{pc}, shows the performance of $\K_{\text{pcLPV}}$ for various orders of polynomial chaos expansion and provides the best performing controller, at the expense of very large synthesis times.  \Fig{sc}, shows the performance of $\K_{\text{scLPV}}$ for various orders of stochastic collocation and provides performance comparable with $\K_{\text{pcLPV}}$, but with much less synthesis time. In summary, the stochastic collocation approach provides a computational efficient framework for synthesizing LPV controllers, and performs better than the classical LPV designs.

\section{Summary}
In this paper, we presented a new framework for synthesizing LPV controllers using polynomial chaos framework. This framework builds on the probabilistic representation of the scheduling variables. We treat the LPV system as a stochastic linear system, where the parameter is treated as a random variable with a given distribution. This paper has taken this approach to develop  new algorithms for synthesis of linear quadratic regulators. We pursued two approaches: Galerkin projection and stochastic collocation to develop the necessary theoretical framework. The synthesis algorithms were applied to a regulator design problem for a nonlinear missile system, which significantly outperformed controllers synthesized using classical LPV design techniques. We also presented tradeoff between synthesis time and closed-loop performance for the various methods considered. Based on our study, we concluded that
the stochastic collocation approach provides a computational efficient framework for synthesizing LPV controllers, and performs better than the classical LPV designs.

\begin{table}
\begin{center}
\begin{tabular}[h!]{l|ccl}
\hline Controller & Synthesis & \# of SDP  & Cost-to-go \\
                  & Time (sec) & Variables &   \\ \hline
$\K_\text{LTI}$ & 0.8121 & 7 & 209.3011 \\
$\K_\text{LPV}$ (2 samples) & 1.0635 & 14 & 220.3811\footnotemark \\
$\K_\text{LPV}$ (20 samples) & 1.4584 & 140 & 193.2252   \\
$\K_\text{LPV}$ (50 samples) & 3.1821 & 350 & 175.0896  \\
$\K_\text{LPV}$ (100 samples) & 6.5485 & 700 & 169.3382\\
$\K_\text{pcLPV}$ ($3^\text{rd}$ order PC) & 124.6578  & 216 &  104.0365\\
$\K_\text{pcLPV}$ ($4^\text{th}$ order PC) & 594.4620  & 380 &  103.9989\\
$\K_\text{pcLPV}$ ($5^\text{th}$ order PC) & 1189.0534 & 612 &  103.9018 \\
$\K_\text{scLPV}$ ($5^\text{th}$ order SC) & 2.0419 & 28 &  104.5838 \\
$\K_\text{scLPV}$ ($9^\text{th}$ order SC) & 2.8364 & 70 &  104.0548 \\
$\K_\text{scLPV}$ ($12^\text{th}$ order SC) &3.3598 & 98 &  103.9075 \\
\hline
\end{tabular}
\end{center}
\caption{Comparison of controller performances and synthesis times.}
\label{table1}
\end{table}
\footnotetext{The cost-to-go is computed over $t\in[0\;20]$. Since the states do not converge to zero for this case, the actual cost-to-go is infinity.}

\begin{figure}
\includegraphics[width=\linewidth]{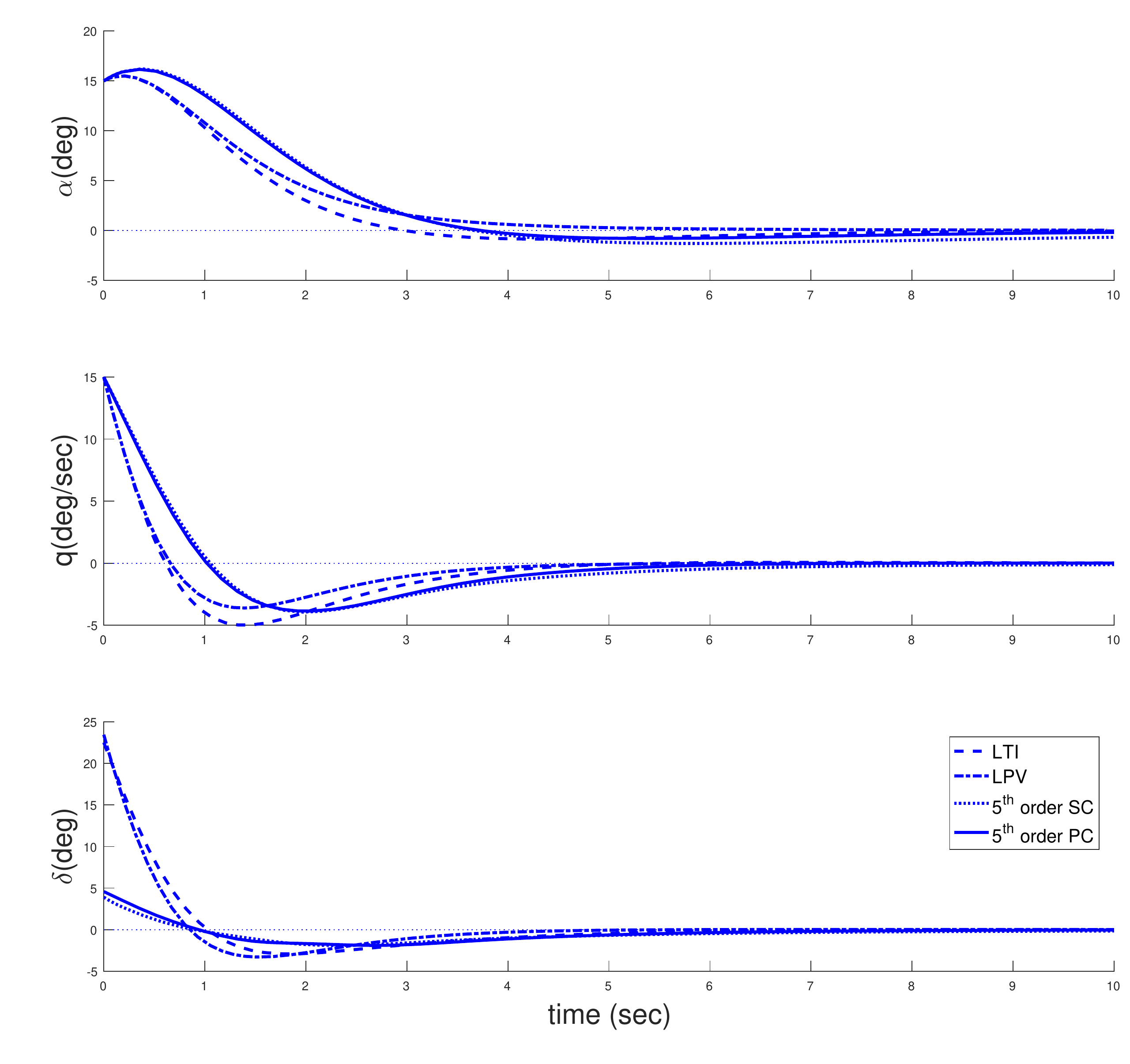}
\caption{State and control trajectories for the missile autopilot by applying different controllers.}
\figlabel{all}
\end{figure} 

\begin{figure}
\includegraphics[width=\linewidth]{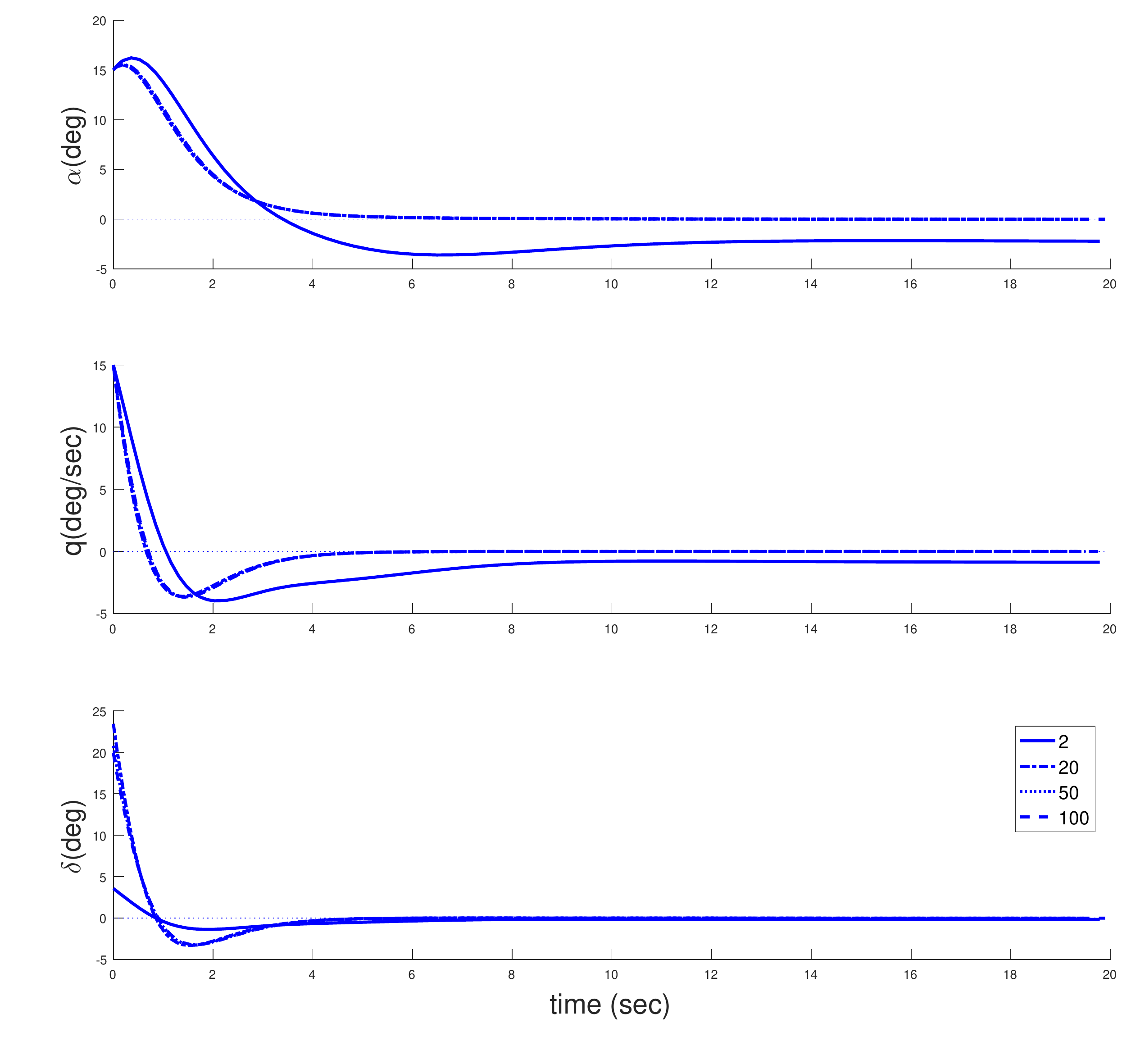}
\caption{State and control trajectories for the missile autopilot by applying LPV control with different sample points.}
\figlabel{LPV}
\end{figure} 

\begin{figure}
\includegraphics[width=\linewidth]{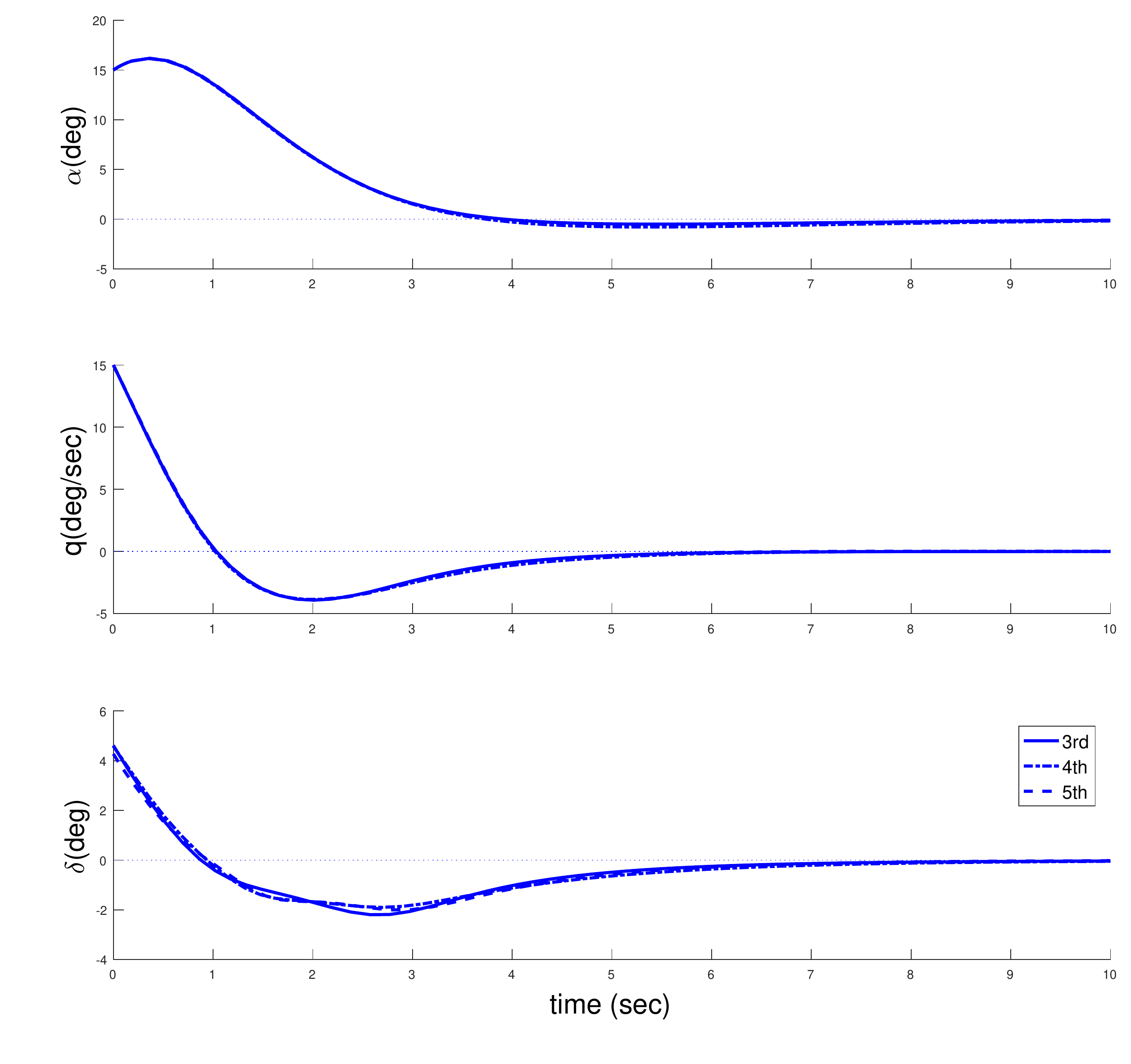}
\caption{State and control trajectories for the missile autopilot by applying pcLPV control with different order polynomial chaos expansion.}
\figlabel{pc}
\end{figure} 

\begin{figure}
\includegraphics[width=\linewidth]{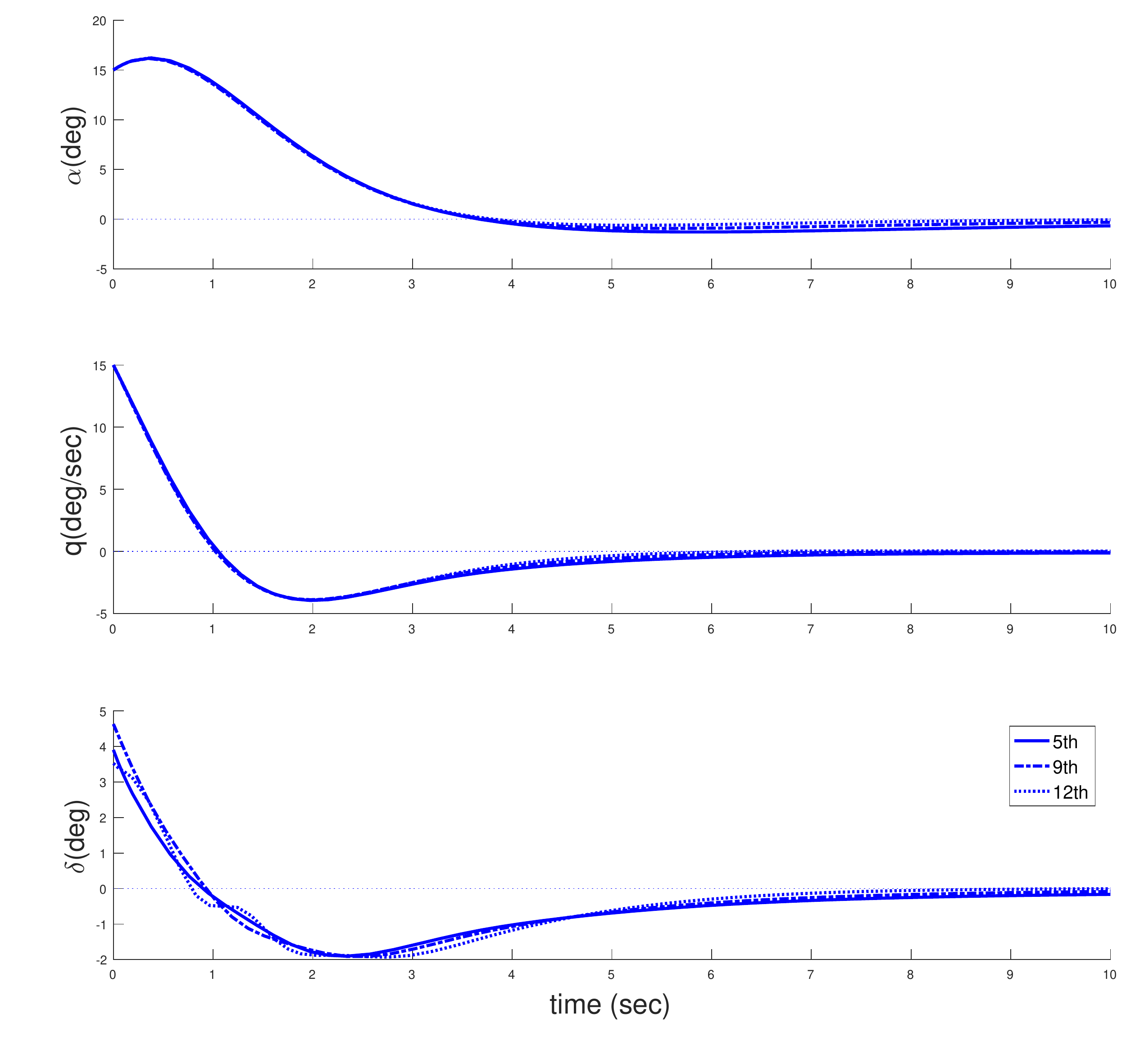}
\caption{State and control trajectories for the missile autopilot by applying scLPV control with different sample points.}
\figlabel{sc}
\end{figure} 

\section*{Acknowledgements}
This work was supported by the TIAS Award of Heep Fellowship.

\bibliographystyle{unsrt}
\bibliography{raktim}

%

\end{document}